\documentclass[11pt]{amsart}

\usepackage{amsthm}
\usepackage{amsmath}
\usepackage{amssymb}
\usepackage{tikz}
\usepackage{wrapfig}
\usepackage[all]{xy}
\usepackage{graphicx}
\usetikzlibrary{patterns}
\usetikzlibrary{calc}
\usepackage{enumerate}
\usepackage{multirow}
\usepackage{url}
\usepackage{tabularx,booktabs}

\usepackage{todonotes}

\newtheorem{theorem}{Theorem}[section]
\newtheorem{corollary}[theorem]{Corollary}
\newtheorem{lemma}[theorem]{Lemma}
\newtheorem{proposition}[theorem]{Proposition}

\newtheorem{question}[theorem]{Question}

\theoremstyle{remark}
\newtheorem{example}[theorem]{Example}
\newtheorem{remark}[theorem]{Remark}

\theoremstyle{definition}
\newtheorem{definition}[theorem]{Definition}

\newcommand{\RR}{\mathbb{R}}
\newcommand{\ZZ}{\mathbb{Z}}
\newcommand{\QQ}{\mathbb{Q}}
\newcommand{\NN}{\mathbb{N}}

\newcommand{\uu}{\textbf{u}}

\newcommand{\ee}{\textbf{e}}

\newcommand{\OO}{\textbf{0}}

\newcommand{\calN}{\mathcal{N}}

\newcommand{\conv}{\mathrm{conv}}

\newcommand{\PP}{\mathbb{P}}

\newcommand{\nvol}{\mathrm{nvol}}
\newcommand{\fcone}{\mathrm{fcone}}

\newcommand{\lin}{\mathrm{lin}}

\newcommand{\relint}{\mathrm{relint}}
 
\DeclareMathOperator{\chisel}{ch}
\newcommand{\cube}{C_n}

\begin{document}

\keywords{Lattice Polytope, Smooth Polytope, Ehrhart Theory, Ehrhart Positivity, Berline-Vergne valuations}
\subjclass{52B20, 14M25, 52B45}

\title{Smooth polytopes with negative Ehrhart coefficients}

\author{Federico Castillo}
\address[Federico Castillo]{405 Snow Hall, 1460 Jayhawk Blvd. Lawrence, KS 66045, USA}
\email{fcastillo@ku.edu}

\author{Fu Liu}
\address[Fu Liu]{One Shields Avenue, Department of Mathematics, University of California, Davis, CA 95616, USA}
\email{fuliu@math.ucdavis.edu}

\author{Benjamin Nill}
\address[Benjamin Nill]{Institut f\"ur Algebra und Geometrie, Otto-von-Guericke-Universit\"at Magdeburg, Geb\"aude 03, Universit\"atsplatz 2, 39106 Magdeburg, Germany}
\email{benjamin.nill@ovgu.de}

\author{Andreas Paffenholz}
\address[Andreas Paffenholz]{Fachbereich Mathematik, TU Darmstadt, Dolivostr. 15, 64293 Darmstadt, Germany}
\email{paffenholz@mathematik.tu-darmstadt.de}

\thanks{Fu Liu is partially supported by NSF grant DMS-1265702 and a grant from the Simons Foundation \#426756. Benjamin Nill is partially supported by DFG grant 314838170, GRK 2297 MathCoRe, and by Vetenskapsr{\aa}det grant NT:2014-3991.}

\begin{abstract}
We present examples of smooth lattice polytopes in dimensions $3$ and higher with the maximal possible number of negative Ehrhart coefficients. This answers a question by Bruns. We also discuss Berline-Vergne valuations as a useful tool in proving Ehrhart positivity results.
\end{abstract}

\maketitle

\section{Introduction}
\label{sec:introduction}

A {\em lattice polytope} is the convex hull of finitely many elements of $\ZZ^n$. An $n$-dimensional lattice polytope $P$ is called {\em smooth} (or {\em Delzant}) if each vertex is contained in precisely $n$ edges, and the primitive edge directions form a lattice basis of $\ZZ^n$. Equivalently, the normal fan of $P$ is a {\em unimodular fan}, i.e., each cone in the fan is spanned by a lattice basis. The importance of smooth polytopes stems from the fact that they correspond to very ample torus-invariant divisors on nonsingular projective toric varieties (as well as from their significance as moment polytopes of symplectic toric manifolds). We refer to \cite{CLS} for more background and motivation.

The study of invariants of lattice polytopes that are invariant under {\em unimodular transformations} (i.e., affine lattice automorphisms) leads inevitably to the study of the coefficients of their Ehrhart polynomials, see \cite{BK}. For this, we consider the function $i(P, t) := |tP \cap \ZZ^n|$ for $t \in \ZZ_{>0}$ that counts lattice points in dilates of $P$. In \cite{Ehrhart} Ehrhart proved that this function extends to a polynomial function $i(P,t)$ of degree $\dim(P)$, called the {\em Ehrhart polynomial} of $P$. It is known (we refer to the book \cite{BRbook}) that the leading coefficient equals the Euclidean volume of $P$, the second highest coefficient equals half of the boundary volume of $P$, and the constant coefficient equals $1$. In particular, these three Ehrhart coefficients are always positive rational numbers. However, starting from dimension 3, there are examples of Ehrhart polynomials of lattice polytopes where all the other coefficients can (even simultaneously) be negative, see \cite{Hibinegative}. Hence, it is interesting to ask when a lattice polytope has only positive Ehrhart coefficients. It turns out that quite a few families of polytopes have this property. The simplest example is the standard cube $[0,1]^n$, other families include standard crosspolytopes \cite[Exercise 4.61(b)]{enum}, zonotopes \cite{zonotopes}, the $y$-family of generalized permutohedra \cite{postnikov} which includes the previously already studied case of Stanley-Pitman polytopes \cite{stanley-pitman}, and cyclic polytopes and its generalizations \cite{highintegral}. It is worth mentioning that the proofs of these different families of polytopes having positive Ehrhart coefficients are all quite different. For example, Ehrhart positivity of a standard crosspolytope $\Diamond_n$ follows from the result that all roots of $i(\Diamond_n,t)$ have negative real parts, which is a consequence of the fact that the $h^*$-polynomial of $\Diamond_n$ has all roots on the unit circle on the complex plane. See \cite{EhrPosSurvey} for a survey with a general discussion on different techniques for attacking the problem of Ehrhart positivity. 

There are several actively researched properties of lattices polytopes, for example the \emph{Integer Decomposition Property} (IDP) or having a \emph{unimodular triangulation} (UT) \cite{bgbook}. It is well known that for any lattice polytope $P$ its large enough dilation $cP$ will have UT, and that UT implies IDP. However, note that dilating a polytope doesn't change the signs of the coefficients in its Ehrhart polynomial. Therefore, it is easy to give examples of lattice simplices in dimension 3 that show that Ehrhart positivity and UT/IDP are not directly related.

Let us note that the examples with many negative Ehrhart coefficients described in \cite{Hibinegative} are not smooth. In \cite[Question~7.1]{bruns}, Bruns posed the following problem:

\begin{question} Do the Ehrhart polynomials of smooth lattice polytopes have positive coefficients?
\label{ques:smooth}
\end{question}

As we will see, the answer to this question is no.
\begin{theorem}\label{main} In each dimension $n \ge 3$, there exists a smooth lattice polytope $P$ such that the $t^j$-coefficient of its Ehrhart polyomial $i(P,t)$ is negative for any $j=1, \ldots, n-2$.
\end{theorem}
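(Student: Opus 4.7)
The bound of $n-2$ negative coefficients is maximal (the leading, $(n{-}1)$-th, and constant coefficients are always positive), so the goal is simply to exhibit one smooth polytope per dimension achieving this extreme. My plan is to construct $P_n$ by chiseling corners of the cube $C_n = [0,M]^n$ for a suitably large integer $M$. A chisel at a smooth vertex $v$ of depth $k$ cuts off the simplex $v + k\conv(0, d_1, \dots, d_n)$, where $d_1, \dots, d_n$ are the primitive edge directions at $v$, producing a smooth $(n{-}1)$-simplex facet in its place. Smoothness of the resulting polytope follows by checking that, at each newly introduced vertex, the primitive edge directions form a lattice basis (they are of the form $-d_i$ together with $d_j - d_i$, which by a direct determinant computation is a $\ZZ$-basis whenever the old $d_i$ were).

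The Ehrhart polynomial of the chiseled polytope can be computed by inclusion-exclusion: using $\binom{kt+n}{n} - \binom{kt+n-1}{n-1} = \binom{kt+n-1}{n}$, each chisel at a single vertex with depth $k$ subtracts $\binom{kt+n-1}{n}$ from the count. If we chisel a collection of corners $v$ with (non-overlapping) depths $k_v$, then
\[
i(P_n,t) = (Mt+1)^n - \sum_{v}\binom{k_v t+n-1}{n}.
\]
Expanding via unsigned Stirling numbers of the first kind, the coefficient of $t^j$ equals $\binom{n}{j}M^j - \sum_v \frac{s(n,j)}{n!}k_v^j$. The task is to choose $M$ and the $k_v$ so that these expressions are negative for $j=1, \dots, n-2$ while remaining positive for $j=n-1, n$.

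The main obstacle lies in this last step: with depths constrained by $k_v < M$ (for smoothness), a naive configuration of chisels does not drive the low-$j$ coefficients negative, because the ratio $\binom{n}{j}/(s(n,j)/n!)$ is unfavorable for small $j$. Overcoming this requires either chiseling many corners coherently, combining chiseling with other local operations (such as edge truncations or iterated chisels), or using a base polytope other than the cube. For $n=3$, I would find an explicit example by direct parameter tuning. For $n>3$, I would aim for an inductive construction, for instance applying further chisels to a product $P_{n-1}\times[0,M]$: such a product already inherits negative coefficients in a shifted range via the product rule $a'_j = a_j + M a_{j-1}$, and a well-placed chisel can reintroduce negativity in the $t^1$-coefficient that the product operation would otherwise kill. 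The Berline-Vergne valuations advertised in the abstract express each Ehrhart coefficient as a sum of local contributions over faces, offering the systematic framework needed to track how each chisel affects each coefficient and to certify the required signs.
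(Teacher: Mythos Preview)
Your proposal is a plan rather than a proof: you correctly set up chiseling, correctly compute the effect on the Ehrhart polynomial via inclusion-exclusion, and correctly identify the obstacle (a single round of chiseling the $n$-cube cannot make the low-degree coefficients negative). But you then list several possible remedies---iterated chisels, edge truncations, products, Berline--Vergne---without carrying any of them to completion. The paper's proof makes two concrete choices that you leave open, and both are essential.

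First, the $3$-dimensional base. The paper does not tune parameters for a single chisel of $C_3$; it performs \emph{iterated} full chiseling of $3^k C_3$ at geometrically decreasing depths $3^{k-1},3^{k-2},\ldots,1$, obtaining $B_k$ with $8\cdot 3^k$ vertices. Each round multiplies the vertex count by $3$, so the linear coefficient becomes $3^{k-2}(27-8k)$, which is negative for $k\ge 4$. Your Stirling-number computation is for one round of chiseling and, as you observe, cannot succeed; you mention iterated chisels only as one option among several and do not pursue it.

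Second, the step from dimension $3$ to dimension $n$. The paper does not induct via $P_{n-1}\times[0,M]$ plus a corrective chisel. It takes the fixed $3$-dimensional polytope $B_k$ and multiplies once by $aC_{n-3}$, then shows by an explicit growth estimate (choosing $a\sim \tfrac{7}{n}k\,3^{k-2}$ and letting $k\to\infty$) that every coefficient $\mu_1,\ldots,\mu_{n+1}$ is eventually dominated by a term of the form $-c_1 k^3 3^{3k}$. Your inductive scheme is not obviously doomed, but it faces a real tension you do not resolve: to make $a'_1=a_1+M<0$ you need $M$ small, while to make $a'_{n-2}=a_{n-2}+Ma_{n-3}<0$ (where $a_{n-2}>0$, $a_{n-3}<0$) you need $M$ large. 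For instance, with $B_4$ one has $|a_1|=45$ and $a_2=15363$, so no integer $M$ satisfies $a_2/|a_1|<M<|a_1|$. Your suggested fix---a further chisel to recover negativity of $\mu_1$---perturbs all coefficients simultaneously, and you give no argument that it does not destroy negativity elsewhere.

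Finally, the Berline--Vergne valuations are used in the paper only for the \emph{positivity} direction (Proposition~\ref{main-prop}, $n\le 6$), not for constructing negative examples; they play no role in the proof of Theorem~\ref{main}.
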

 
The proof will be given in Section~\ref{sec:chiseling}. These examples are obtained by iteratively cutting off vertices of a dilated $3$-cube (following terminology in \cite{bruns} we will call this {\em chiseling}) and then taking the product with a dilated $(n-3)$-cube. 

We will also discuss one situation more closely. When cutting off all vertices of a dilated $n$-cube (without any further iterative chiseling), one needs at least dimension $7$ in order to get some negative linear coefficients. We can show that dimension $7$ is minimal in the following precise sense. Let us take the normal fan of the standard cube $[0,1]^n$ and make a stellar subdivision (also called star subdivision) of all of its full-dimensional cones (we refer again to \cite{CLS} for definitions). Let us denote this fan by $\calN_n$. 

\begin{proposition}\label{main-prop} For $n\le 6$, the Ehrhart polynomial of any smooth lattice polytope with normal fan $\calN_n$ has only positive coefficients. In each dimension $n \ge 7$, there exist smooth lattice polytopes with normal fan $\calN_n$ whose Ehrhart polynomials have negative linear coefficients.
\end{proposition}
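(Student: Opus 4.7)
The plan is to compute the Ehrhart polynomial of any smooth lattice polytope with normal fan $\calN_n$ explicitly by inclusion-exclusion and then estimate its coefficients. Every such polytope $P$ is, up to lattice translation, obtained from a box $[0,a_1]\times\cdots\times[0,a_n]$ with $a_i\in\ZZ_{\ge 1}$ by chiseling a corner simplex of leg-length $b_v\in\ZZ_{\ge 1}$ at each of its $2^n$ vertices $v$; non-degeneracy and latticeness force $b_v+b_{v'}\le a_i-1$ for every cube edge $\{v,v'\}$ in coordinate direction $i$, while unimodularity of the primitive edge directions at the new vertices is automatic.

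Performing inclusion-exclusion on the decomposition $[0,a_1]\times\cdots\times[0,a_n]=P\cup\bigcup_v Q_v$, where $Q_v$ is the closed corner simplex, $P\cap Q_v$ is the shared chisel facet, and the $Q_v$ are pairwise disjoint, and then combining $i(Q_v,t)=\binom{b_vt+n}{n}$ with $i(P\cap Q_v,t)=\binom{b_vt+n-1}{n-1}$ via Pascal's identity, I get
\[
i(P,t)=\prod_{i=1}^{n}(a_it+1)-\sum_{v}\binom{b_vt+n-1}{n}.
\]
Expanding via elementary symmetric polynomials $e_k$ and the rising-factorial identity $x^{\overline{n}}=\sum_k c(n,k)\,x^k$, where $c(n,k)$ is the unsigned Stirling number of the first kind, I read off
\[
[t^k]\,i(P,t)=e_k(a_1,\dots,a_n)-\frac{c(n,k)}{n!}\sum_v b_v^k.
\]

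For the positivity part $(n\le 6)$, I bound $\sum_v b_v^k$ as follows. Summing $b_v+b_{v'}\le a_i-1$ over the $2^{n-1}$ cube edges parallel to the direction minimizing $a_i$ gives $\sum_v b_v\le 2^{n-1}(a_{\min}-1)$, and since $b_v\le a_{\min}$ we have $b_v^k\le a_{\min}^{k-1}b_v$, so $\sum_v b_v^k<2^{n-1}a_{\min}^k$. Combined with $e_k(a_1,\dots,a_n)\ge\binom{n}{k}a_{\min}^k$, this yields
\[
[t^k]\,i(P,t)>a_{\min}^k\Bigl(\tbinom{n}{k}-\tfrac{c(n,k)\,2^{n-1}}{n!}\Bigr),
\]
so the statement reduces to the finite arithmetic inequality $n!\binom{n}{k}>c(n,k)\,2^{n-1}$ for all $n\in\{3,4,5,6\}$ and $k\in\{1,\dots,n-2\}$. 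For $k=1$ this is precisely $n^2>2^{n-1}$, which holds for $n\le 6$ and first fails at $n=7$; the remaining nine inequalities are a direct numerical check.

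For the existence part $(n\ge 7)$, I specialize to uniform parameters $a_i=a$ and $b_v=b$, so that the linear coefficient collapses to $c_1=na-(2^n/n)\,b$. Because $2^n/n>2n$ for $n\ge 7$, taking $a=2b+1$ (the minimal lattice value compatible with the edge constraint) and letting $b$ grow forces $c_1<0$; the smallest explicit example is $(n,a,b)=(7,5,2)$, giving $c_1=-11/7$. The only real obstacle is the finite numerical verification in the positivity step: since my estimate is already sharp at the transition $n=6\to 7$ for $k=1$, I do not expect a uniform closed-form bound to replace the case-by-case check for the higher $k$.
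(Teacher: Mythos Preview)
Your proof is correct, and for the existence half ($n\ge 7$) it coincides with the paper's argument: specialize to the uniform chisel $Q_n(a,b)$, compute the linear coefficient $na-2^nb/n$, and exhibit $(a,b)=(5,2)$ at $n=7$.

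For the positivity half ($n\le 6$) you take a genuinely different route. The paper deliberately avoids direct estimation and instead invokes the Berline--Vergne construction: it computes the BV-$\alpha$-values of the pointed feasible cones occurring in $\calN_n$ (via the auxiliary polytope $P_n(a,b)$ with a single chiseled corner, Lemma~\ref{crucial-lemma}) and tabulates them for $n\le 7$, observing that all values are positive for $n\le 6$. Since $\alpha$-values depend only on the normal fan, this immediately gives positivity of every Ehrhart coefficient for every lattice polytope with normal fan $\calN_n$, without ever bounding volumes or chisel depths. Your argument, by contrast, parametrizes all such polytopes explicitly, writes down the closed form $[t^k]i(P,t)=e_k(a)-\frac{c(n,k)}{n!}\sum_v b_v^k$, and then uses the edge constraints $b_v+b_{v'}\le a_{\min}-1$ to squeeze $\sum_v b_v^k<2^{n-1}a_{\min}^k$, reducing everything to the finite check $n!\binom{n}{k}\ge c(n,k)2^{n-1}$. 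This is entirely elementary and self-contained; it also makes transparent why the threshold is exactly $n=7$ (your $k=1$ inequality is $n^2>2^{n-1}$). What you lose is the paper's intended illustration: the authors explicitly say they want to showcase $\alpha$-positivity as a transferable technique, and indeed Lemma~\ref{counter} uses the gap between $\alpha$-positivity and Ehrhart-positivity to make a further point that your approach does not touch.
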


As will be discussed in Section~\ref{sec:BV}, the main purpose of this result is to illustrate the method of Berline-Vergne valuations as a useful tool in proving positivity of Ehrhart coefficients by verifying the positivity of the so-called BV-$\alpha$-values. (This method was first developed in \cite{BValpha}, in which the authors reduced the problem of proving positivity of Ehrhart coefficients of all lattice generalized permutohedra to proving positivity of all BV-$\alpha$-values arising from regular permutohedra.) In particular, Lemma~\ref{counter} clarifies the relation between this stronger `alpha-positivity' of the normal fan and `Ehrhart-positivity' of the lattice polytope.

In toric geometry, Proposition~\ref{main-prop} has a natural interpretation.

\begin{corollary}
Let $X_n$ be the projective manifold obtained by blowing up $(\PP^1)^n$ in all it torus-invariant fixpoints. Then for $n\ge7$ there exists a very ample divisor $D$ on $X_n$ such that the Hilbert polynomial $k \mapsto h^0(X_n,k D)$ has a negative coefficient, while no such divisor exists for $n\le 6$.
\end{corollary}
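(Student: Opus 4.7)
The plan is to deduce the corollary directly from Proposition~\ref{main-prop} by invoking the standard dictionary of toric geometry. The first step is to identify $X_n$ as the smooth projective toric variety whose fan is $\calN_n$. The product $(\PP^1)^n$ is the toric variety associated to the normal fan of the unit cube $[0,1]^n$, whose maximal cones correspond bijectively to its $2^n$ torus-fixed points. Since blowing up a torus-invariant smooth point corresponds to the stellar subdivision of the associated maximal cone (a standard fact, see \cite{CLS}), and since the $2^n$ fixed points are pairwise disjoint, blowing them up simultaneously produces the toric variety whose fan is precisely $\calN_n$.

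Next I would set up the translation from divisors to polytopes. On a smooth projective toric variety, ample and very ample divisors coincide, and every ample divisor class contains a torus-invariant representative. A torus-invariant very ample divisor $D$ on $X_n$ corresponds to a smooth lattice polytope $P_D$ whose normal fan is $\calN_n$; moreover, $h^0(X_n, kD) = |kP_D \cap \ZZ^n| = i(P_D, k)$ for all integers $k \geq 0$, so the Hilbert polynomial of $D$ is exactly the Ehrhart polynomial of $P_D$.

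With this dictionary in place, Proposition~\ref{main-prop} gives the corollary immediately. For $n \geq 7$, any smooth lattice polytope with normal fan $\calN_n$ having a negative linear Ehrhart coefficient, whose existence is guaranteed by the proposition, yields a torus-invariant very ample divisor on $X_n$ whose Hilbert polynomial has a negative coefficient. For $n \leq 6$, Ehrhart positivity for every such polytope rules out any torus-invariant example, and hence also any non-torus-invariant one, because each ample class admits a torus-invariant representative. The argument presents no real obstacle, since all the mathematics is in Proposition~\ref{main-prop}; the only point worth checking is that quantifying over all very ample divisors, rather than just torus-invariant ones, is still covered by the proposition, and this is precisely the purpose of reducing to torus-invariant representatives of the ample class.
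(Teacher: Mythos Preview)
Your proposal is correct and follows exactly the intended route: the paper does not give a separate proof of this corollary but presents it as the ``natural interpretation'' of Proposition~\ref{main-prop} in toric geometry, relying on precisely the standard dictionary you spell out (fan of $X_n$ equals $\calN_n$, very ample torus-invariant divisors $\leftrightarrow$ smooth lattice polytopes with that normal fan, Hilbert polynomial $=$ Ehrhart polynomial). Your remark that every ample class admits a torus-invariant representative is the right way to pass from ``all very ample divisors'' to the polytope statement.
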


Let us finish this introduction by discussing a weakening of Brun's original question. For this, let us recall that 
a lattice polytope is called {\em reflexive} if the origin is contained in its interior and every facet has lattice distance one from the origin. In fixed dimension $n$ there are only finitely many reflexive polytopes up to unimodular transformations. As the examples constructed in Theorem~\ref{main} are far from reflexive, it is a natural question, whether smooth reflexive polytopes have positive Ehrhart coefficients. Because of their correspondence to toric Fano manifolds, smooth reflexive polytopes were completely classified up to dimension $9$ \cite{Oebro, Database}. We used \texttt{polymake} \cite{Polymake} to check that up to dimension $8$ none of them has a negative Ehrhart coefficient. 
However, counterexamples come up in dimension $9$.

\begin{example}\label{ex:smoothreflexive}
Consider the lattice polytope $P$ defined by
\[ \begin{bmatrix}
\scriptstyle
1 & 0 & 0 & 0 & 0 & 0 & 0 & 0 & 0 \\
0 & 1 & 0 & 0 & 0 & 0 & 0 & 0 & 0 \\
0 & 0 & 1 & 0 & 0 & 0 & 0 & 0 & 0 \\
0 & 0 & 0 & 1 & 0 & 0 & 0 & 0 & 0 \\
0 & 0 & 0 & 0 & 1 & 0 & 0 & 0 & 0 \\
0 & 0 & 0 & 0 & 0 & 1 & 0 & 0 & 0 \\
0 & 0 & 0 & 0 & 0 & 0 & 1 & 0 & 0 \\
0 & 0 & 0 & 0 & 0 & 0 & 0 & 1 & 0 \\
0 & 0 & 0 & 0 & 0 & 0 & 0 & 0 & 1 \\
0 & 0 & 0 & 0 & 0 & 0 & 0 & 0 & -1 \\
-1 & -1 & -1 & -1 & 0 & 0 & 0 & 0 & 4 \\
0 & 0 & 0 & 0 & -1 & -1 & -1 & -1 & -4 
	\end{bmatrix} 
	\begin{bmatrix} x_1 \\ x_2 \\ x_3 \\ x_4 \\ x_5 \\ x_6 \\ x_7 \\ x_8 \\ x_9 \end{bmatrix}
	\le 
	\begin{bmatrix} 1 \\ 1 \\ 1 \\ 1 \\ 1 \\ 1 \\ 1 \\ 1 \\ 1 \end{bmatrix}
\]
We can check using \texttt{polymake} \cite{Polymake} that $P$ is smooth and reflexive, and its Ehrhart polynomial is 
\begin{align*}
i(P,t) =& \ 12477727/18144 t^9 + 12477727/4032 t^8 + 9074291/1512 t^7 +  630095/96 t^6 \\
& \ + 19058687/4320 t^5 +  117857/64 t^4 + 3838711/9072 t^3 + 11915/1008 t^2 \\
& \ -6673/630  t + 1,
\end{align*}
which has a negative linear coefficient. 
\end{example}

This motivates the following question:

\begin{question} Do there exist smooth reflexive polytopes in some dimension $n$ with the maximal possible number ($n-2$) of negative Ehrhart coefficients?
\end{question}

\section{Chiseling smooth lattice polytopes} 
\label{sec:chiseling}

Here, we will prove Theorem~\ref{main} and the second part of Proposition~\ref{main-prop}.

\subsection{Preliminaries}

Let us define the following lattice polytopes in $\RR^n$, here $\ee_1, \ldots, \ee_n$ denotes the standard basis:
\begin{itemize}
\item $C_n := [0,1]^n$ the $n$-dimensional {\em standard cube},
\item $\Delta_{n-1} := \conv(\ee_1,\cdots, \ee_n)$ the $(n-1)$-dimensional {\em standard simplex},
\item $S_n := \conv(\OO\cup \Delta_{n-1})$, the $n$-dimensional {\em unimodular simplex}.
\end{itemize}

Clearly, $\Delta_{n-1}$ is isomorphic to $S_{n-1}$ under a unimodular transformation. Let us note their well-known Ehrhart polynomials.

\begin{lemma}
$i(C_n,t)=(t+1)^n$,\; 
$i(\Delta_{n-1},t) = \binom{t+n-1}{n-1}$,\; 
$i(S_n,t) = \binom{t+n}{n}$.
\label{ehr-lemma}
\end{lemma}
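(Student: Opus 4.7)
The plan is to compute each Ehrhart polynomial directly, since all three polytopes admit elementary descriptions of the lattice points in their dilates. For the cube, I would observe that $tC_n = [0,t]^n$, so the lattice points form the grid $\{0,1,\ldots,t\}^n$, which immediately has cardinality $(t+1)^n$; this also confirms polynomiality in $t$.

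For the standard simplex, I would first rewrite $\Delta_{n-1}$ via its defining inequalities as $\{x \in \RR_{\geq 0}^n : x_1 + \cdots + x_n = 1\}$, so that $t\Delta_{n-1} \cap \ZZ^n$ is the set of nonnegative integer solutions to $x_1 + \cdots + x_n = t$. By a standard stars-and-bars argument this count equals $\binom{t+n-1}{n-1}$.

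For $S_n = \conv(\OO \cup \Delta_{n-1})$, the vertices are $\OO, \ee_1, \ldots, \ee_n$, so I would identify $S_n$ with the inequality description $\{x \in \RR_{\geq 0}^n : x_1 + \cdots + x_n \leq 1\}$; hence $t S_n \cap \ZZ^n$ counts nonnegative integer solutions to $x_1 + \cdots + x_n \leq t$. Introducing a slack variable $x_{n+1} := t - (x_1+\cdots+x_n) \geq 0$ bijects these with nonnegative integer solutions to $x_1 + \cdots + x_{n+1} = t$, and another application of stars-and-bars gives $\binom{t+n}{n}$.

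There is no real obstacle: the only point to verify is the passage from the vertex description to the inequality description in each case, which is immediate. In particular, all three formulas agree with the known fact that the leading coefficient is the Euclidean volume ($1$, $\tfrac{1}{(n-1)!}$, and $\tfrac{1}{n!}$ respectively) and that the constant term is $1$.
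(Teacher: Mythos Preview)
Your proof is correct and complete; the paper itself does not supply a proof of this lemma, simply introducing the formulas as ``well-known Ehrhart polynomials.'' Your stars-and-bars arguments are exactly the standard justifications one would give.
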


\subsection{Iterative chiseling of dilates of the cube}

Let $P \subset \RR^n$ be an $n$-dimensional smooth lattice polytope. Let us choose a vertex $v$ of $P$ with primitive edge directions $u_1, \ldots, u_n$. Let us assume that there is a $b \in \ZZ_{>0}$ such that for all $i=1, \ldots, n$ the lattice point $v+b u_i$ is still in $P$ but not a vertex of $P$. Let us define $P'$ as the convex hull of all the vertices of $P$ except for $v$ together with the new vertices $v + b u_1, \ldots, v + b u_n$. We say that $P'$ is obtained from $P$ by {\em chiseling off the vertex $v$ at distance $b$}. It is straightfoward to check that $P'$ is still a smooth lattice polytope. We refer to \cite{bruns} for this terminology and more on chiseling smooth polytopes.

\begin{figure}[t]
\begin{center}
\begin{tikzpicture}[scale=.7]

\draw (0,0)--(0,3)--(2,3)--(3,2)--(3,0)--cycle;
\draw (4,0)--(7,0)--(7,3)--(4,3)--cycle;
\draw[dotted,color=black!90] (8,0)--(8,3)--(10,3)--(11,2)--(11,0)--cycle;
\draw (11,2)--(11,3)--(10,3)--cycle;
\draw[dotted,color=black!90] (12,0)--(12,3)--(14,3)--(15,2)--(15,0)--cycle;
\draw (15,2)--(14,3);
\node at (3.5,1.5){=};
\node at (7.5,1.5){$-$};
\node at (11.5,1.5){+};

\end{tikzpicture}
\end{center}
\caption{Inclusion-Exclusion for the operation of chiseling off a vertex at distance $b=1$ from $3C_2=[0,3]^2$.}
\label{fig:chisel}
\end{figure}
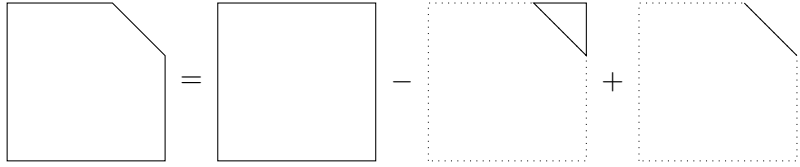

We compute the effect of chiseling a vertex $v$ of $P$ at distance $b$ on the Ehrhart polynomial. By inclusion-exclusion we obtain (e.g., see Figure~\ref{fig:chisel}):
\begin{align*}
 i(P',t)\ &=\  i(P, t) -\; i(b S_n, t)\; +\; i(b \Delta_{n-1}, t)\\
  &=\ i(P,t)\; -\; \binom{bt+n}{n}\; +\; \binom{b t+n-1}{n-1}\\ &=\ i(P,t)\; -\; \binom{bt+n-1}{n}\;.
\end{align*}

The \emph{lattice edge length} of an edge $e$ of $P$ is $\ell(e):=|e\cap\ZZ^n|+1$. Let $\ell_P\,:=\,\min(\ell(e)\mid e\text{ is an edge of }P)$. The \emph{full chiseling} $\chisel(P,b)$ of $P$ for an integer $b<\ell_P/2$ is obtained by chiseling all vertices of $P$ at distance $b$. Our choice of $b$ implies that the polytope $\chisel(P,b)$ is still a smooth polytope. Its normal fan is the fan obtained from the normal fan of $P$ by subdividing each maximal cone spanned by primitive generators $v_1, \ldots, v_n$ with $v_1+\cdots+v_n$. If $f_0$ denotes the number of vertices of $P$, then we obtain 
\begin{align}
 i(\chisel(P,b),t)\ &=\  i(P, t)\; -\; f_0 \binom{bt+n-1}{n}\;.\label{eq:chiselehrhart}
\end{align}

We can clearly iterate the chiseling process, as long as all edges still have lattice edge length at least $3$. For a lattice polytope $P$ and a sequence $B:=(b_1, \ldots, b_k)$ of positive integers we define a sequence $P_i$, $0\le i\le k$ of polytopes by $P_0:=P$ and $P_i:=\chisel(P_{i-1},b_i)$ for $1\le i\le k$. Then let $\chisel(P, B):=P_k$, the \emph{chiseling of $P$ by the sequence $B$}. Clearly, $\chisel(P,B)$ is smooth if $P$ is smooth and the lattice edge length of any edge of $P_{i-1}$ is at least $2b_i+1$ for $1\le i\le k$. Note that each step in this process multiplies the number of vertices by $n$, so if $P$ has $f_0$ vertices, then $P_i$ has $n^if_0$ vertices, for $0\le i\le k$. 

\subsection{Proof of the second part of Proposition~\ref{main-prop}}

Now, let $a, b$ be positive integers such that $a > 2b$. We define $Q_n(a,b) := \chisel(a C_n,b)$. Clearly, the normal fan of $Q_n(a,b)$ is $\calN_n$, the fan defined in the introduction. Using inclusion-exclusion we obtain as above:
\begin{align*}
i(Q_n(a,b),t) = (a t+1)^n - 2^n \binom{bt+n-1}{n}\,.
\end{align*}

Therefore, the linear coefficient of the Ehrhart polynomial of $Q_n(a,b)$ equals
\begin{align*}
a n - b \frac{2^n}{n}\,.
\end{align*}

In particular, for $a=5$ and $b=2$, elementary calculus shows that this value is strictly negative for $n \ge 7$. For instance, 
\begin{multline*}
  i(Q_7(5,2),t)\ =\ \frac{24608351}{315}t^7 \;+\;
  \frac{1640113}{15}t^6 \;+\; \frac{589345}t^5\\ \;+\;
  \frac{64729}{3}t^4 \;+\; \frac{182027}{45}t^3 \;+\;
  \frac{1729}{5}t^2 - \frac{11}{7}t \;+\; 1\,.
\end{multline*}

This proves the second part of Proposition~\ref{main-prop}.

\subsection{Proof of Theorem~\ref{main}}

For $k\ge 1$ let $B_k$ be the polytope
\begin{align*}
B_k\ :=\ \chisel(3^k\cdot C_3,(3^{k-1},3^{k-2},\ldots,3,1))\,,
\end{align*}
i.e., the polytope obtained by scaling the $3$-cube with $3^k$, then cutting all vertices at depth $3^{k-1}$, then cutting all vertices of the polytope obtained thereby at depth $3^{k-2}$, and continuing this process until in the last step we cut all vertices at depth $1$. This leads to a $3$-dimensional smooth lattice polytope with $8\cdot 3^k$ vertices and $4\cdot 3^k+2$ facets. Iterated application of \eqref{eq:chiselehrhart} lets us compute the Ehrhart polynomial of $B_k$. It is
\begin{align*}
i(B_k,t)\ &=\  (3^{k}t+1)^3\; -\; \sum_{j=0}^{k-1} 8\cdot 3^j\binom{3^{k-1-j}t+2}{3}\ =:\ q_3t^3+q_2t^2-q_1+1
\end{align*}
for some coefficients $q_1, q_2, q_3\in\QQ$. Note that $q_1$ is defined to be the negative of the first Ehrhart coefficient. 

By rearranging terms in the above formula, we obtain explicit formulas for $q_1, q_2, q_3:$
\begin{align*}
  q_1\ &=\ 3^{k-2}(8k-27)\\
  q_2\ &=\ 3^{k-1}(7\cdot 3^k+2)\\
  q_3\ &=\ \frac123^{k-2}(17\cdot 3^{2k}+1)\,.
\end{align*}
Note that $q_1, q_2,q_3>0$ for $k\ge 4$, so that the Ehrhart polynomial of $B_k$ has a negative linear coefficient for $k\ge 4$. 

Now let $\cube(a):=a\cdot C_n$ with Ehrhart polynomial $i(\cube(a),t)=(at+1)^n$ and define
\begin{align*}
  P^n(k,a)\ :=\ B_k\times \cube(a)\,.
\end{align*}
$P^n(k,a)$ is a $(3+n)$-dimensional smooth lattice polytope. The Ehrhart polynomial of a product of two lattice polytopes is the product of the Ehrhart polynomials of the factors, so the Ehrhart polynomial of $P^n(k,a)$ is
\begin{align*}
i(P^n(k,a),t)\ =\ (q_3t^3+q_2t^2-q_1t+1)(at+1)^n\ =:\ \sum_{i=0}^{n+3} \mu_it^i
\end{align*}
for coefficients $\mu_j\in\QQ$, $0\le j\le n+3$. By expanding this product we can write down explicit formulas for these coefficients:
\begin{align*}
  \mu_0\ &=\ 1\\
  \mu_1\ &=\ na-q_1\\
  \mu_j\ &=\
           \begin{alignedat}[t]{2}
             {n\choose j}a^j&\,-\, {n\choose j-1}a^{j-1}q_1\\&\,+\, {n\choose j-2}a^{j-2}q_2\,+\, {n\choose j-3}a^{j-3}q_3&\text{ for } 2\le j\le n+1
           \end{alignedat}
\\
  \mu_{n+2}\ &=\ a^{n-1}\left(aq_2+nq_3\right)\\
  \mu_{n+3}\ &=\ a^nq_3
\end{align*}

From now on, let $n \ge 1$ be fixed. We want to show that for $k$ large enough and for some $a$ depending on $k$ all $\mu_j$ for $1\le j\le n+1$ can be simultaneously negative. Let 
\begin{align}
  a \ := \ \left\lfloor \left. \frac{7}{n} k 3^{k-2} \right. \right\rfloor\,.\label{choice-a}
\end{align}
Clearly, for sufficiently large $k \ge 28$ we have that  
\begin{equation*} 
a\ \ge\ \frac{6}{n} k 3^{k-2}\ \ge\ 1\, ,
\end{equation*} 
and we may also assume that  
\begin{align}
q_1 \ &>\ n a,&  q_2\ \ &< \ 8 \cdot 3^{2k-1}, & q_3 \ &<\ 3^{3k}\,.\label{choice-k}
\end{align}
In particular, $\mu_1 < 0$. Let $2 \le j \le n+1$. Then
\begin{align*}
a^{3-j}\mu_j \ =\   {n\choose j}a^3\,-\, {n\choose j-1}a^2 q_1\,+\, {n\choose j-2}a q_2\,+\, {n\choose j-3} q_3 \,.
\end{align*}
By \eqref{choice-k}, we can strictly bound the right hand side from above by
\begin{align*}
a^{3-j}\mu_j \ &< \ \left({n\choose j}\,-\, {n\choose j-1}n \right) a^3\,+\, {n\choose j-2} 8 a \cdot 3^{2k-1}\,+\, {n\choose j-3} 3^{3k}\\
&=\ \left(\frac{n+1-j}{j}\,-\, n \right) {n\choose j-1} a^3\,+\, {n\choose j-2}8 a \cdot 3^{2k-1}\,+\, {n\choose j-3} 3^{3k}\,.
\end{align*}
As $\frac{n+1-j}{j}\,-\, n < 0$, the choice of $a$ in \eqref{choice-a} bounds this further from above by
\begin{multline*}
a^{3-j}\mu_j \ < \ \left(\frac{n+1-j}{j}\,-\, n \right) {n\choose j-1} \left(\frac{6}{n}\right)^3 k^3 3^{3k-6}\\ \,+\, {n\choose j-2} \frac{56}{n} k 3^{3k-3}\,+\, {n\choose j-3} 3^{3k}\,.
\end{multline*}
In other words, there exist positive numbers $c_1, c_2, c_3$ that only depend on $n$ and $j \in \{2, \ldots, n+1\}$ such that we can rewrite this inequality in the form  
\begin{align*}
a^{3-j}\mu_j \ < \ (-c_1 k^3 \,+\, c_2 k \,+\, c_3) 3^{3k}\,.
\end{align*}
For sufficiently large $k$ this expression will become negative. Hence, $\mu_j < 0$. This finishes the proof of Theorem~\ref{main}.\hfill$\qed$

\subsection{Examples}

Let us do a couple of examples. In dimension $3$ already the base polytope $B_k$ for sufficiently large $k$ provides an example of a smooth lattice polytope with negative first Ehrhart coefficient. For example, $B_4$ is a smooth lattice $3$-polytope with $648$ vertices, $972$ edges, and $326$ facets. Its Ehrhart polynomial is
\begin{align*}
  i(B_4,t)\ =\ 501921t^3\;+\; 15363t^2\;-\; 45t\;+\; 1\,.
\end{align*}
This is the first value of $k$ with a negative linear coefficient in the Ehrhart polynomial, as the Ehrhart polynomial of $B_3$ is 
\begin{align*}
  i(B_3,t)\ =\ 18591t^3\;+\; 1719t^2\;+\; 9t\;+\; 1\,.
\end{align*}

To get a $4$-dimensional example with values for $a$ and $k$ as in the proof, we can take $n:=1$, $k:=28$ and $a:=498205702352484$. Then 
\begin{align*}
&i(P^1(28,498205702352484),t)\ =\\  &\quad 5633398927928862087321748973638659814694960718075062244t^4\\&\quad\;+\;619688517319652881734980589359332452421773t^3\\&\quad \;-\;248254149429756452913678525969t^2\;-\;2541865828329t\;+\;1\,.
\end{align*}

Note that the choice of $a$ used in the proof above is a particular choice, while the construction also works for other values of $k$ and $a$ that often lead to considerably smaller lattice polytopes. For this, one should be aware that having chosen a specific $k$ one often cannot choose $a$ simply as in \eqref{choice-a} because it might be too large compared to a too small value of $k$. For instance, let $n=1$, $k=6$ and $a=730$ (here, $a$ is not chosen as in the proof, as $k=6$ would lead to $a=3402$, thus, $\mu_1 = 1701 > 0$). This gives a smooth lattice $4$-polytope with $11664$ vertices, $2920$ facets, and Ehrhart polynomial
\begin{align*}
  i(P^1(6,730),t)\ 
      \ &=\ 267104933370t^4 \;+\;1271473119t^3\;-\; 1215t^2\;-\;971t \;+\; 1\,.
\end{align*}
Its $h^*$-polynomial (the enumerator polynomial of the generating series of the Ehrhart polynomial) equals
\begin{align*}
  h^*(t)\ =\;\; &265833460008t^4\, \;+\; 2934339846011t^3 \;+\; 2941968690561t^2\\&+\; \;268376404299t \;+\; 1\,.
\end{align*}

Let us also give the Ehrhart polynomial of a $5$-dimensional smooth lattice polytope of the form $P^2(k,a)$, where we take $n=2$, $k=8$ and $a=8599$ (note again that these values are much smaller than the ones obtained in the proof above). We get
\begin{align*}
 i(P^2(8,8599),t)
                \;& = 19723429316570261841t^5 \;+\;12014689492982241t^4\\                    &\qquad\qquad\;-\;237422178t^3\;-\;289492130t^2\;-\;9775t\;+\;1\,.                    
\end{align*}
Its $h^*$-polynomial is 
  \begin{align*}
      h^*(t)\ =&\; 19711414627129339776t^5\;+\; 512689015334843195945t^4\\&+\;1301746334895061755914t^3\;+\; 512929309125860809290t^2\\&+\; 19735444005536319994t\;+\;1\,.
  \end{align*}
  Hence, the polytope  $P^2(8,8599)$ has $19735444005536320000$
lattice points, of which $24029378406980224$
 are on the boundary. 
  
  We used \texttt{normaliz}~\cite{Normaliz} and \texttt{polymake}~\cite{Polymake} to verify the computations for $P^1(6,730)$. The computation of the Ehrhart polynomial of $P^2(8,8599)$  did not finish within two days on a 64 core Intel Xeon E5. Already the polytope $P^{7}(19,2231332587)$, a $10$-dimensional Ehrhart negative example, has more than $4\cdot 10^{92}$ lattice points.
  \begin{remark}
    Our construction also works with other initial polytopes than a chiseling of the $3$-cube. For instance, we can use a prism over the smooth hexagon $H:=\conv(\pm(1,1), \pm(1,0), \pm(0,1))$. Let
    \begin{align*}
      H_k\ :=\  \chisel(3^k\cdot (H\times[0,1]),(3^{k-1},3^{k-2},\ldots,3,1))\,, 
    \end{align*}
    and $Q^n(k,a) := H_k \times C_n(a)$. Then $Q^1(5,457)$ is a smooth $4$-dimensional lattice polytope with $5832$ vertices and Ehrhart polynomial
    \begin{align*}
      i(Q^1(5,457),t)\ =\ 19125906543t^4\;+\;176889015t^3\;-\;648t^2\;-\;191t+1\,.
    \end{align*}
    We haven't found a $4$-dimensional smooth lattice polytope with negative linear and quadratic coefficient in the Ehrhart polynomial that has a smaller leading coefficient. Its $h^*$-polynomial is
    \begin{align*}
            h^*(t)\ =&\;18949017072t^4\;+\; 209854304999t^3\;+\; 210915640245t^2\\&+\; 19302794715t\;+\;1\,.   
    \end{align*}
    Also, $Q^3(9,46099)$ is a smooth $6$-dimensional lattice polytope with Ehrhart polynomial
    \begin{align*}
     i(Q^3(9,46099)\ =\;\;&  2178889417115552212024508181t^6\\&+\;331568043035736113553429t^5\;-\;340786031913009t^4\\&-\;615783337806158t^3\;-\;13464323277t^2\;-\;19167t\;+\;1\,.
    \end{align*} 
    It would be interesting to give lower bounds on volume, respectively, number of vertices or lattice points of smooth lattice polytopes with negative Ehrhart coefficients.
  \end{remark}

\section{Berline-Vergne valuations and BV-$\alpha$-values}
\label{sec:BV}

\subsection{Preliminaries}

We will show that any lattice polytope with normal fan $\calN_n$ (not just $Q_n(a,b)$) for $n\le 6$ has positive Ehrhart coefficients. While one could possibly exploit the specific nature of this class of examples to do some tedious optimization over their Ehrhart polynomials, we will use a more systematic approach that was developed in \cite{BValpha}. For this let us recall some notation. We denote by $\nvol(F)$ the volume of $F$ normalized with respect to the affine lattice of lattice points in the affine hull of $F$ (the normalization is such that $\nvol(S_n)=1/n!$). Given a polynomial $p(t)$ and $k \in \NN$, we define $[t^k] p(t)$ as the coefficient of the monomial $t^k$ in $p(t)$. 

\begin{definition}\label{defn:fcone}
  Suppose $P$ is a polyhedron and $F$ is a face. The \emph{feasible cone} of $P$ at $F$ is:
\begin{align*}
{\fcone(F,P)} = \left\{ \uu: x + \delta \uu \in P \hspace{5pt}\text{for sufficiently small $\delta$}\right\},
\end{align*}
where $x$ is any relative interior point of $F.$  (It can be checked that the definition is independent from the choice of $x.$) 
The \emph{pointed feasible cone} of $P$ at $F$ is 
\begin{align*}
\fcone^p(F,P) = \fcone(F,P)/\lin(F)\,.
\end{align*}
\end{definition}

\begin{remark}\label{rem:fcone}
  For convenience of the calculation we will carry out, we can extend the above definitions of $\fcone(F,P)$ and $\fcone^p(F,P)$ to any $F$ that is a convex subset of $P.$ Note that if $F$ is not a face of $P,$ the cone $\fcone^p(F,P)$ is not necessarily pointed. 
\end{remark}

Let us recall the \emph{BV-$\alpha$-valuation} due to Berline and Vergne. Here, $[ \ \cdot \ ]$ is the indicator function of sets.

\begin{theorem}[Berline, Vergne \cite{localformula}] There is a function $\Psi([C])$ on indicator functions of rational cones $C$ in $\RR^n$ with the following properties: 
\begin{enumerate}[(P1)]
  \item $\Psi( \cdot)$ is a valuation on the algebra of rational cones in $\RR^n$. %
  \item For a lattice polytope $P \subset \RR^n$, the following {\em local formula} (also called {\em McMullen's formula} \cite{mcmullen, BValpha}) holds:
    \begin{align*}
      \displaystyle 
      |P\cap \ZZ^n| = \sum_{F: \text{ a face of }P} \Psi([\fcone^p(F,P)])\, \nvol(F)\,.
    \end{align*}
  \item If a cone $C$ contains a line, then $\Psi([C])=0$.
\item $\Psi$ is invariant under the action of $O_n(\ZZ)$, the group of orthogonal unimodular transformations. More precisely, if $T$ is an orthogonal unimodular transformation, for any cone $C,$ we have $\Psi([C]) = \Psi([T(C)]).$ 
\end{enumerate}
\end{theorem}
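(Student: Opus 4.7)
The plan is to construct $\Psi$ explicitly by the recursive formula of Berline and Vergne and then verify the four listed properties in turn. The main analytic tool is the meromorphic exponential sum $S(C;\xi) := \sum_{x\in C\cap\ZZ^n} e^{\langle\xi,x\rangle}$, defined on pointed rational cones; this is a rational function of $e^{\xi}$, a valuation in $C$, and vanishes identically once $C$ contains a line. Brion's theorem expresses the lattice-point generating function of any rational polytope $P$ as $S(P;\xi) = \sum_v S(\tcone(v,P);\xi)$, a sum over vertex tangent-cone contributions. Together with the Lawrence-type identity for $\int_C e^{\langle\xi,x\rangle}dx$, this is what makes a ``local'' construction of $\alpha$-values plausible.

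The construction of $\Psi$ itself proceeds by induction on $\dim\lin(C)$. For $C=\{0\}$, set $\Psi([C])=1$. For higher dimensions, use the standard Euclidean splitting $\RR^n = \lin(C)\oplus\lin(C)^{\perp}$ to transport the situation into the affine span of $C$, then define $\Psi([C])$ so that the local formula of (P2) is forced to hold whenever $P$ is a simplex attached along $C$; this expresses $\Psi([C])$ as a signed sum of $\Psi$-values of proper faces (inductively available) plus a globally computable lattice-point correction. Given this recursive construction, (P1) follows because the defining recursion uses only quantities (lattice sums, integrals, normalized volumes) that are valuations; (P3) follows because the lattice sum on a cone containing a line vanishes as a meromorphic function, so the recursion forces $\Psi([C])=0$; and (P4) follows because the only non-canonical choice in the entire construction is the Euclidean inner product, which is $O_n(\ZZ)$-invariant.

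The main obstacle is verifying (P2) for all lattice polytopes rather than just the simplices used to set up the recursion. My approach would be to combine Brion's theorem with inductive peeling: decompose $|P\cap\ZZ^n|$ into vertex-cone contributions, and then iteratively replace each $\fcone^p(F,P)$ contribution for faces of decreasing codimension by the prescribed $\Psi([\fcone^p(F,P)])\,\nvol(F)$ term, accounting for overcounting via M\"obius inversion on the face lattice. The delicate combinatorial point is ensuring that these substitutions telescope cleanly into $\sum_F \Psi([\fcone^p(F,P)])\,\nvol(F)$, because a single deep-codimension face contributes along many chains of the face lattice. It is precisely the compatibility of the orthogonal-complement splitting with restriction to faces (the face of a face is canonically a face, and orthogonal complements commute with taking linear spans of subfaces) that makes the bookkeeping close up to the right value, improving on McMullen's purely existential argument to produce a canonical and computable $\Psi$.
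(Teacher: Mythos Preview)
The paper does not prove this theorem. It is stated in the preliminaries of Section~3 as a cited result of Berline and Vergne \cite{localformula}, with no accompanying proof; the paper only uses properties (P1)--(P4) as black boxes in the subsequent computations of $\alpha$-values. So there is no ``paper's own proof'' to compare your proposal against.

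That said, your sketch is in the right spirit: the Berline--Vergne construction does proceed by a recursion on the dimension of the cone, does rely on the Euclidean orthogonal splitting (which is exactly what yields the $O_n(\ZZ)$-invariance of (P4) rather than full $GL_n(\ZZ)$-invariance), and the valuation and line-vanishing properties (P1), (P3) are indeed inherited from the corresponding properties of the exponential lattice sum. Your treatment of (P2), however, is too vague to be a proof: the actual Berline--Vergne argument does not ``peel off'' faces via M\"obius inversion on the face lattice, but rather builds a meromorphic function $\mu(C)(\xi)$ on each affine cone whose constant term at $\xi=0$ is $\Psi([C])$, and proves that the local Euler--Maclaurin formula holds as an identity of meromorphic functions before specializing to $\xi=0$. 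The telescoping you gesture at is encoded analytically in this step, and making it precise combinatorially without the meromorphic scaffolding would be substantially harder than you suggest.
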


  Let us define the {\em BV-$\alpha$-values} of faces $F$ of $P$ as 
  \begin{align*}
    \alpha(F,P) := \Psi([\fcone^p(F,P)])\,.
  \end{align*}

Note that as $\fcone^p(F,P)$ of $P$ at $F$ is dual to the normal cone of $P$ at $F$ the BV-$\alpha$-values of faces of $P$ depend only on its normal fan.

The local formula of property (P2) allows us to compute Ehrhart coefficients from the BV-$\alpha$-values of faces together with their normalized volumes:

\begin{corollary}[Theorem 3.1 of \cite{BValpha}] \label{thm:coefficient}
Let $P \subset \RR^n$ be a lattice polytope. Then 
\begin{align*}
[t^k] i(P,t) = \displaystyle \sum_{\dim(F)=k} \alpha(F,P) \ \nvol(F)\,.
\end{align*}
\end{corollary}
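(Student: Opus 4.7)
The plan is to derive the corollary by applying the local formula (P2) to the dilates $tP$ and then exploiting how feasible cones and normalized volumes behave under scaling.

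First, I would fix a positive integer $t$ and apply the local formula to the lattice polytope $tP$. The faces of $tP$ are precisely the sets $tF$ as $F$ ranges over faces of $P$, so we get
\begin{align*}
i(P,t) \;=\; |tP\cap \ZZ^n| \;=\; \sum_{F\text{ face of }P} \alpha(tF,\,tP)\,\nvol(tF).
\end{align*}

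The next step is to simplify the summands. The feasible cone at a relative interior point of $tF$ inside $tP$ coincides with the feasible cone at the corresponding relative interior point of $F$ inside $P$, since scaling by $t>0$ does not change the set of directions pointing into the polytope. Hence $\fcone^p(tF,tP)=\fcone^p(F,P)$, and by definition $\alpha(tF,tP)=\alpha(F,P)$. On the other hand, the normalized volume scales homogeneously: because $\nvol$ is taken with respect to the affine lattice in the affine hull of the face, we have $\nvol(tF)=t^{\dim F}\nvol(F)$. Substituting these two identities yields
\begin{align*}
i(P,t)\;=\;\sum_{F\text{ face of }P} \alpha(F,P)\,\nvol(F)\,t^{\dim F}.
\end{align*}

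Finally, the right-hand side is manifestly a polynomial in $t$, and by Ehrhart's theorem so is the left-hand side. Since the identity holds for every positive integer $t$, it holds as an equality of polynomials in $t$. Grouping faces by dimension and reading off the coefficient of $t^k$ gives the claimed formula. The only conceptual point that needs care, which I would spell out briefly, is the invariance $\alpha(tF,tP)=\alpha(F,P)$ under dilation; everything else is a routine polynomial identification, so I do not anticipate a genuine obstacle.
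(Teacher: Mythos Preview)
Your argument is correct: applying (P2) to $tP$, using that $\fcone^p(tF,tP)=\fcone^p(F,P)$ so that $\alpha(tF,tP)=\alpha(F,P)$, and that $\nvol(tF)=t^{\dim F}\nvol(F)$, immediately gives $i(P,t)=\sum_F \alpha(F,P)\,\nvol(F)\,t^{\dim F}$, from which the coefficient formula follows. The paper does not supply its own proof of this corollary---it simply cites it as Theorem~3.1 of \cite{BValpha}---so there is nothing to compare; your derivation is exactly the standard one.
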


Our goal will be to show that $\alpha(F,Q_n(a,b)) > 0$ for all faces $F$ of $Q_n(a,b)$ if $n\le 6$. Clearly, this stronger positivity condition ($\alpha$-positivity) implies positivity of the coefficients of the Ehrhart polynomial $i(Q_n(a,b),t)$.

\subsection{Computing BV-$\alpha$-values}

Let us start by computing the BV-$\alpha$-values for the polytopes $C_n, \Delta_{n-1}$, and $S_n$.
\begin{lemma}
\begin{enumerate}
\item For any $k$-dimensional face $F$ of $C_n$, we have
  \begin{align*}
\displaystyle \alpha(F,C_n)=2^{k-n}\,.
  \end{align*}
\item For any $k$-dimensional face $F$ of $\Delta_{n-1}$, we have
  \begin{align*}
\alpha(F,\Delta_{n-1})=\dfrac{k!}{\binom{n}{k+1}} \cdot [t^k]\binom{t+n-1}{n-1}\,.
  \end{align*}
\item Let $F$ be a $k$-dimensional face of $S_n.$ Then
  \begin{align*}
 \alpha(F,S_n) = \begin{cases} 2^{k-n}, \quad & \text{if $\OO \in F$,} \\
      \displaystyle \dfrac{k! \cdot [t^k]\binom{t+n}{n} - \binom{n}{k} \ 2^{k-n}}{\binom{n}{k+1}}, & \text{otherwise.} \end{cases}
  \end{align*}
\end{enumerate}
\label{alpha-lemma}
\end{lemma}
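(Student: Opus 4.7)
My plan is to combine the local formula from Corollary~\ref{thm:coefficient} with the orthogonal-unimodular-invariance property (P4): whenever a subgroup of $O_n(\ZZ)$ preserves the polytope and acts transitively on some collection of faces, those faces share a common $\alpha$-value, and I can solve for it by matching against the Ehrhart polynomial from Lemma~\ref{ehr-lemma}.

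For part (1), the signed-permutation group $\{\pm1\}^n \rtimes S_n \subset O_n(\ZZ)$ preserves the normal fan of $C_n$ and acts transitively on its $k$-faces, so a single common value $\alpha_k$ is to be determined. Since each $k$-face of $C_n$ is a unit $k$-cube with $\nvol=1$ and there are $\binom{n}{k}2^{n-k}$ of them, Corollary~\ref{thm:coefficient} applied to $(t+1)^n$ gives $\binom{n}{k} = \binom{n}{k}\, 2^{n-k}\,\alpha_k$, hence $\alpha_k = 2^{k-n}$. Part (2) is completely parallel: $S_n$ acts on $\RR^n$ by coordinate permutations (which are orthogonal unimodular), acts transitively on the $\binom{n}{k+1}$ many $k$-faces of $\Delta_{n-1}$, each of which is a unimodular $k$-simplex with $\nvol=1/k!$, and Corollary~\ref{thm:coefficient} pins down the common value.

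For part (3), I would split the $k$-faces of $S_n$ into those containing $\OO$ and those not. The $S_n$-action permuting $\ee_1,\ldots,\ee_n$ (and fixing $\OO$) lies in $O_n(\ZZ)$ and acts transitively within each class. The crux is handling the faces $F=\conv(\OO,\ee_{i_1},\ldots,\ee_{i_k})$ that contain $\OO$: at a relative interior point of $F$ the slab inequality $\sum_i x_i\le 1$ is slack and only the coordinate inequalities $x_i\ge 0$ for $i\notin\{i_1,\ldots,i_k\}$ are active, so I would show that $\fcone(F,S_n)$ coincides with $\fcone(F',C_n)$ for the corresponding coordinate $k$-face $F'$ of $C_n$ (obtained by fixing the coordinates outside $\{i_1,\ldots,i_k\}$ to $0$); since $\lin(F)=\lin(F')$ as well, the pointed feasible cones agree and part (1) yields $\alpha(F,S_n)=2^{k-n}$. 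Plugging both contributions into Corollary~\ref{thm:coefficient} with $i(S_n,t)=\binom{t+n}{n}$ then yields
\[
k!\,[t^k]\binom{t+n}{n}\;=\;\binom{n}{k}\,2^{k-n}\;+\;\binom{n}{k+1}\,\alpha,
\]
which I solve for $\alpha$ to obtain the stated formula. The local feasible-cone identification in part (3) is the only non-bookkeeping step; everything else is a direct application of symmetry and the local formula.
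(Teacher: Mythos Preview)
Your proposal is correct and follows essentially the same argument as the paper: use $O_n(\ZZ)$-symmetry (property (P4)) to reduce each class of $k$-faces to a single unknown $\alpha$-value, then solve for it via the local formula (Corollary~\ref{thm:coefficient}) against the known Ehrhart polynomials from Lemma~\ref{ehr-lemma}. Your treatment is slightly more explicit than the paper's in two places---you name the specific subgroups of $O_n(\ZZ)$ witnessing transitivity, and you spell out the active-inequality computation showing that $\fcone^p(F,S_n)=\fcone^p(F',C_n)$ for faces through $\OO$---but the structure and key ideas are identical.
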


\begin{proof}

(1) In the unit cube, all faces have normalized volume one. On top of that, the pointed feasible cones of $C_n$ at $k$-dimensional faces are in the same orbit under the action of $O_n(\ZZ)$, so by property (P4) they must all share the same $\alpha$-value, say $\alpha$. Applying Corollary~\ref{thm:coefficient} and Lemma~\ref{ehr-lemma}, a simple counting argument gives
$\binom{n}{k}2^{n-k}\alpha = \binom{n}{k}$.

(2) Each of the $\binom{n}{k+1}$ many $k$-dimensional faces of the standard simplex is a standard simplex itself, with volume $1/k!$. Similar to the case of $C_n$, all the pointed feasible cones of $\Delta_{n-1}$ at $k$-dimensional faces share the same $\alpha$-value $\alpha$. Applying Corollary~\ref{thm:coefficient} and Lemma~\ref{ehr-lemma}, we get $\displaystyle \binom{n}{k+1} \cdot 1/k! \cdot \alpha = [t^k]\binom{t+n-1}{n-1}$.

(3) There are two orbits of the pointed feasible cones at $k$-dimensional faces of $S_n$ under the action of $O_n(\ZZ)$. Any of the $\binom{n}{k}$ many $k$-dimensional face that contains the origin $\OO$ has the same pointed feasible cone as those we have already computed for the cube $C_n$. Thus their $\alpha$-value is $2^{k-n}$. The second orbit is given by the $k$-dimensional faces that do not contain the origin, there are $\binom{n}{k+1}$ of these. Similar to before, one can argue that all of them share the same $\alpha$-value, say $\alpha$. Finally, all $k$-dimensional faces of $S_n$ have the same volume $1/k!$. Putting this together, applying Corollary~\ref{thm:coefficient} and Lemma~\ref{ehr-lemma} yields: 
\begin{align*}
\binom{n}{k}\ 2^{k-n}\ \dfrac{1}{k!} + \binom{n}{k+1}\ \alpha \ \dfrac{1}{k!}=  [t^k]\binom{t+n}{n}.
\end{align*}
\end{proof}

Let $a > b$ be two positive integers. We define $P_n(a,b)$ to be the smooth lattice polytope obtained from $a C_n$ by chiseling off the origin at distance $b$.

\begin{lemma}
Let $F$ be a $k$-dimensional face of $P_n(a,b).$ Then
\begin{align*}
\alpha(F, P_n(a,b)) =
\begin{cases}
    \dfrac{\binom{n}{k}2^{k-n} - k!\cdot [t^k]\binom{t+n-1}{n} }{\binom{n}{k+1}}, \quad & \text{if $F$ is a face of $b \Delta_{n-1}$,} \\
  2^{k-n}, \quad & \text{otherwise}.
\end{cases}
\end{align*}
\label{crucial-lemma}
\end{lemma}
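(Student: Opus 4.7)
The plan is to use the BV-$\alpha$-valuation machinery to classify the $k$-faces of $P_n(a,b)$ by symmetry type, compute all but one of the $\alpha$-values directly, and then solve for the remaining one from the local formula of Corollary~\ref{thm:coefficient} using the explicit Ehrhart polynomial
\[ i(P_n(a,b),t)\ =\ (at+1)^n - \binom{bt+n-1}{n}, \]
which follows from the single-vertex chiseling formula derived in Section~\ref{sec:chiseling}.

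First I would classify the $k$-faces of $P_n(a,b)$ into three families: (A) $k$-faces of $aC_n$ not containing $\OO$; (B) modified faces $F' = F \cap \{x : \sum x_j \ge b\}$ coming from $k$-faces $F$ of $aC_n$ that do contain $\OO$ (which exist only for $k \ge 1$); and (C) proper $k$-faces of the chiseled facet $b\Delta_{n-1}$. I then check that families (A) and (B) both have $\alpha = 2^{k-n}$. For (A) the feasible cone is literally the same as that of the corresponding face of $aC_n$. For (B), one can pick a relative interior point $x$ of $F'$ with $\sum x_j > b$; locally $P_n(a,b)$ agrees with $aC_n$ there, and $x$ still lies in the relative interior of $F$, so the feasible cone is again the cube cone. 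In either case, Lemma~\ref{alpha-lemma}(1) gives $\alpha = 2^{k-n}$.

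For the (C) faces I exploit transitivity of the coordinate-permutation subgroup $S_n \subset O_n(\ZZ)$ on the $k$-faces of $b\Delta_{n-1}$: property (P4) forces these faces to share a common value $\alpha_C$. To pin down $\alpha_C$ I plug into Corollary~\ref{thm:coefficient}, counting $\binom{n}{k}(2^{n-k}-1)$ type-(A) faces of volume $a^k$, $\binom{n}{k}$ type-(B) faces of volume $a^k - b^k/k!$ (an $a$-scaled $k$-cube with a corner simplex $bS_k$ removed), and $\binom{n}{k+1}$ type-(C) faces of volume $b^k/k!$. Equating the resulting sum to $[t^k] i(P_n(a,b),t) = \binom{n}{k} a^k - b^k[t^k]\binom{t+n-1}{n}$, where I use the rescaling identity $[t^k]\binom{bt+n-1}{n} = b^k[t^k]\binom{t+n-1}{n}$, the $a^k$ contributions cancel because the (A)+(B) cube-cone contribution reproduces exactly $\binom{n}{k}a^k$; isolating $\alpha_C$ in the remaining linear equation gives the stated formula.

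The main obstacle is the bookkeeping in the final step: I need the modified type-(B) volume $a^k - b^k/k!$ to combine correctly with the count $\binom{n}{k}(2^{n-k}-1)$ of type-(A) faces so that the $a^k$ terms match the contribution of the cube to its own Ehrhart polynomial, leaving only the simplex correction. One small sanity check to include is the $k=0$ case, where family (B) is empty but the formula still reproduces the correct constant term $1$.
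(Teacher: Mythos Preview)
Your argument is correct, but it takes a different route for the faces of $b\Delta_{n-1}$ than the paper does. For your families (A) and (B) the two proofs are essentially identical: both identify the pointed feasible cone with a cube cone and quote Lemma~\ref{alpha-lemma}(1). The divergence is in family (C). The paper applies the valuation property (P1) directly to the inclusion--exclusion $[P_n(a,b)] = [aC_n] - [bS_n] + [b\Delta_{n-1}]$ at the level of feasible cones, uses (P3) to kill the non-pointed cube cone, and reads off $\alpha(F,P_n(a,b)) = -\alpha(F,S_n) + \alpha(F,\Delta_{n-1})$ from the already-computed values in Lemma~\ref{alpha-lemma}(2),(3); Pascal's rule then collapses this to the stated formula. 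You instead reuse the \emph{method} of Lemma~\ref{alpha-lemma}'s proof on $P_n(a,b)$ itself: invoke (P4) via the $S_n$-symmetry fixing the chiseled corner to reduce to a single unknown $\alpha_C$, then solve for it from Corollary~\ref{thm:coefficient} and the explicit Ehrhart polynomial. Your approach buys independence from Lemma~\ref{alpha-lemma}(3) (you never need the $S_n$ computation) at the cost of the volume bookkeeping; the paper's approach showcases the local nature of the valuation and avoids any global Ehrhart computation for $P_n(a,b)$.

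Two cosmetic points. First, drop the word ``proper'' in your description of (C): for $k=n-1$ you need $b\Delta_{n-1}$ itself in that family, and indeed your count $\binom{n}{k+1}$ already includes it. Second, your $k=0$ sanity check is fine, but you might also note the $k=n$ case explicitly (the whole polytope is not a face of $b\Delta_{n-1}$, and $2^{n-n}=1$ is the correct value).
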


Let us remark that as BV-$\alpha$-values only depend on the normal cones, it is not surprising that the right side is independent of the precise values of $a$ and $b$.

\begin{proof}

We have the following inclusion-exclusion formula:
\begin{equation}
[P_n(a,b)] = [a C_n]-[b S_n]+[b \Delta_{n-1}].
  \label{equ:ie}
\end{equation}
See Figure \ref{fig:chisel} for a picture of $P_2(3,1)$.

There are two types of faces of $P_n(a,b)$, which we consider separately.

\begin{enumerate}[(i)]
  \item $F$ is a face of $b \Delta_{n-1}$: It follows from the definition of (pointed) feasible cones (Definition \ref{defn:fcone} and Remark \ref{rem:fcone}) and Formula \eqref{equ:ie} that
    \begin{align*}
  [\fcone^p(F, P_n(a,b))]\ =\ [\fcone^p(F, a C_n)]\;&-\;[\fcone^p(F, bS_n)]\\\;&+\;[\fcone^p(F, b\Delta_{n-1})].
    \end{align*}
Note that $\fcone^p(F, a C_n)$ is not pointed, i.e., contains a line. Hence, by Properties (P1) and (P3), we have
\begin{align*}
  \alpha(F, P_n(a,b))\ &=\ - \alpha(F, b S_n)+ \alpha(F,b \Delta_{n-1})\\
  &=\ - \alpha(1/b \cdot F, S_n)+ \alpha(1/b \cdot F, \Delta_{n-1}).
\end{align*}
Since the two $\alpha$-values that appear on the right hand side of above equation are given by the formulas in Lemma~\ref{alpha-lemma}, we obtain the desired result.  
\item The relative interior $\relint(F)$ of $F$ has no intersection with $b \Delta_{n-1}$: There exists a unique face $F'$ of $a C_n$ such that $\relint(F) \subseteq \relint(F')$. One checks that  $\dim(F) = \dim(F')$ and
  \begin{align*}
\fcone^p(F, P_n(a,b))\ =\ \fcone^p(F', a C_n)\ =\ \fcone^p(1/a \cdot F', C_n)\,.
  \end{align*}
Now, the formula follows again from Lemma~\ref{alpha-lemma}.
\end{enumerate}

\end{proof}

\subsection{Proof of the first part of  Proposition~\ref{main-prop}}

\begin{proposition}
The polytope $P_n(a,b)$ (for $a > b$) has positive BV-$\alpha$-values for $n \le 6$.
\label{crucial-prop}
\end{proposition}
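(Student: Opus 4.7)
The plan is to invoke Lemma~\ref{crucial-lemma} directly and reduce the proposition to a finite family of elementary inequalities. Faces $F$ of $P_n(a,b)$ fall into two types according to that lemma. Those whose relative interior misses $b\Delta_{n-1}$ have $\alpha(F,P_n(a,b))=2^{k-n}>0$ automatically, so no work is needed there. All attention will focus on faces contained in the chiseling simplex $b\Delta_{n-1}$.

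For such faces, Lemma~\ref{crucial-lemma} writes $\alpha(F,P_n(a,b))$ as a quotient whose denominator $\binom{n}{k+1}$ is positive for $0\le k\le n-1$. Hence it will suffice to show that the numerator
\[
N(n,k)\;:=\;\binom{n}{k}\,2^{k-n}\;-\;k!\cdot[t^k]\binom{t+n-1}{n}
\]
is strictly positive for every pair $(n,k)$ with $n\le 6$ and $0\le k\le n-1$. Expanding $\binom{t+n-1}{n}=\frac{1}{n!}t(t+1)\cdots(t+n-1)$, the coefficient $[t^k]\binom{t+n-1}{n}$ is an explicit rational number (up to normalization, an unsigned Stirling number of the first kind), so $N(n,k)$ becomes fully explicit.

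The final step is to dispatch these finitely many cases. The case $k=0$ is immediate since $\binom{t+n-1}{n}$ has zero constant term. The tightest case is $k=1$: using $[t^1]\binom{t+n-1}{n}=\tfrac{(n-1)!}{n!}=\tfrac{1}{n}$, the inequality $N(n,1)>0$ simplifies to $n^2>2^{n-1}$, which holds precisely for $n\le 6$. This sharp match with the stated dimension bound is, I suspect, the reason for the threshold $n\le 6$ in Proposition~\ref{main-prop}, and it aligns with the fact that the counterexample $Q_n(5,2)$ in dimension~$7$ from the preceding section fails exactly at the linear coefficient. The remaining pairs $(n,k)$ with $2\le k\le n-1$ give strictly larger margins and can be verified by direct substitution of the small Stirling numbers for $n\le 6$. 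The main obstacle here is entirely bookkeeping: once Lemma~\ref{crucial-lemma} is available, the argument contains no analytic content, and the proposition follows from a short finite verification.
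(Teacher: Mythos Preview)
Your proposal is correct and follows essentially the same approach as the paper: invoke Lemma~\ref{crucial-lemma} to dispose of the ``cube-like'' faces via $2^{k-n}>0$, then reduce the type~(i) faces to a finite check of the numerators $N(n,k)$ for $0\le k\le n-1$ and $n\le6$. The paper simply tabulates all the resulting $\alpha$-values (Table~\ref{tab:alpha1}) and observes their positivity; your added analysis of the $k=0$ and $k=1$ cases---in particular the clean equivalence $N(n,1)>0\iff n^2>2^{n-1}$ pinpointing the threshold $n=6$---is a nice refinement, but the underlying argument is the same finite verification.
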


\begin{proof}
Considering Lemma~\ref{crucial-lemma}, we see that as $2^{k-n} > 0$ it remains to compute $\alpha(F, P_n(a,b))$ for $k$-dimensional faces of $b \Delta_{n-1}$ for $n \le 6$.

\bgroup
\def\arraystretch{1.5}

\centering

\begin{table}[h]
\centering
\begin{tabular}{c|ccccccccc}
  & k &0 & 1                        & 2                       & 3               & 4             & 5             & 6           & 7 \\ \hline
n & &  &                          &                         &                 &               &               &             &   \\
1 &   &$\frac{1}{2}$& 1                        &                         &                 &               &               &             &   \\
2 &  &$\frac{1}{8}$& $\frac{1}{2}$             & 1                       &                 &               &               &             &   \\
3 &  &$\frac{1}{24}$& $\frac{5}{36}$           & $\frac{1}{2}$             & 1               &               &               &             &   \\
4 & & $\frac{1}{64}$& $\frac{1}{24}$            & $\frac{7}{48}$           & $\frac{1}{2} $    & 1             &               &             &   \\
5 &   &$\frac{1}{160}$& $\frac{9}{800} $          & $\frac{1}{24}   $         & $\frac{3}{20}$   & $\frac{1}{2} $  & 1             &             &   \\
6 &   &$\frac{1}{384}$& $\frac{1}{720}$           & $\frac{127}{14400} $      & $\frac{1}{24} $   & $\frac{11}{72}$ & $\frac{1}{2} $  & 1           &   \\
7 &  &$\frac{1}{896}$& $-\frac{5}{3136}$ & $-\frac{1}{800}$ & $\frac{61}{8400}$ & $\frac{1}{24} $ & $\frac{13}{84}$ & $\frac{1}{2}$ & 1
\end{tabular}
\caption{BV-$\alpha$-values for $k$-dimensional faces of type (i) in $P_n(a,b)$}
\label{tab:alpha1}
\end{table}
\egroup

One observes that all $\alpha$-values in the table are positive for $n \le 6$.
\end{proof}

We can now conclude the proof of Proposition~\ref{main-prop}.

\begin{corollary}
The polytope $Q_n(a,b)$ (for $a > 2b$) has positive BV-$\alpha$-values for $n \le 6$.
\end{corollary}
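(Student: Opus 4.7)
The plan is to deduce the corollary from Proposition~\ref{crucial-prop} by showing that every pointed feasible cone of $Q_n(a,b)$ coincides (up to an orthogonal unimodular transformation) with a pointed feasible cone of either $aC_n$ or $P_n(a,b)$. Since BV-$\alpha$-values depend only on the pointed feasible cone at a face (and property (P4) makes them invariant under $O_n(\ZZ)$), this reduction suffices.

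First I would set up the local geometry. Recall that $Q_n(a,b) = \chisel(aC_n, b)$ is obtained from $aC_n$ by chiseling off all $2^n$ vertices at distance $b$. The condition $a > 2b$ guarantees that the $2^n$ chiseled simplices (the \emph{cut facets}) are pairwise disjoint and that each cut affects only a neighborhood of a single vertex of $aC_n$. The symmetry group of the cube $aC_n$ acts transitively on its vertices by elements of $O_n(\ZZ)$; in particular, for every vertex $v$ of $aC_n$ there is an orthogonal unimodular transformation sending $v$ to $\OO$ and mapping a neighborhood of $v$ in $Q_n(a,b)$ onto a neighborhood of $\OO$ in $P_n(a,b)$.

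Next, I would classify the faces $F$ of $Q_n(a,b)$ into two groups. The first group consists of faces $F$ whose relative interior is disjoint from every cut facet. Such an $F$ lies in a unique face $F'$ of $aC_n$ with $\dim F = \dim F'$ and $\relint(F) \subseteq \relint(F')$, and arguing exactly as in case (ii) of the proof of Lemma~\ref{crucial-lemma} yields
\[
\fcone^p(F, Q_n(a,b)) \;=\; \fcone^p(F', aC_n) \;=\; \fcone^p(\tfrac{1}{a} F', C_n).
\]
Hence $\alpha(F, Q_n(a,b)) = 2^{k-n} > 0$ by Lemma~\ref{alpha-lemma}(1). The second group consists of faces $F$ contained in one of the cut facets at some vertex $v$. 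Applying the unimodular transformation taking $v$ to $\OO$ and using $a > 2b$ to ignore the other cuts, we obtain $\fcone^p(F, Q_n(a,b)) = T(\fcone^p(F'', P_n(a,b)))$ for an element $T \in O_n(\ZZ)$ and a face $F''$ of $P_n(a,b)$ that lies in $b\Delta_{n-1}$, of the same dimension as $F$. By property (P4), $\alpha(F, Q_n(a,b)) = \alpha(F'', P_n(a,b))$, and this is strictly positive for $n \le 6$ by Proposition~\ref{crucial-prop} (equivalently, by Table~\ref{tab:alpha1}).

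The main obstacle I anticipate is the bookkeeping behind the second group: one must verify that after the translation and orthogonal unimodular transformation, the local picture of $Q_n(a,b)$ near the chiseled vertex really is the local picture of $P_n(a,b)$ near its cut facet, with no contribution from the other $2^n - 1$ chiselings. This is where the hypothesis $a > 2b$ is essential, since it ensures that the cut at $v$ has no overlap with the cuts at the adjacent vertices of $aC_n$, so the pointed feasible cone at $F$ is determined by the local inclusion--exclusion used in the proof of Lemma~\ref{crucial-lemma}. Once this local identification is in place, the result follows directly.
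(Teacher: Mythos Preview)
Your approach is correct and is essentially the same as the paper's: you use the $O_n(\ZZ)$-symmetry of the cube to identify each pointed feasible cone of $Q_n(a,b)$ with one of $P_n(a,b)$ (or $C_n$), then invoke property (P4) and Proposition~\ref{crucial-prop}. The paper's proof says exactly this in one sentence, noting that every ``missing corner'' of $Q_n(a,b)$ is a rotated $bS_n$, so the normal cones match those of $P_n(2,1)$ up to orthogonal unimodular transformation.
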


\begin{proof}
As the normal cone of any face of $Q_n(a,b)$ (with $a > 2b$) is isomorphic via an orthogonal unimodular transformation to that of a face of $P_n(2,1)$ (any `missing corner' of $Q_n(a,b)$ is a rotated $b S_n$), again by property (P4) of the BV-$\alpha$-valuation, they have the same BV-$\alpha$-values. 
\end{proof}

\subsection{BV-$\alpha$-positivity versus Ehrhart-positivity}

Even though the BV-$\alpha$-values start to have negative values at $n =7,$ 
the Ehrhart polynomials of $P_n(a,b)$ always have positive coefficients as we show in the lemma below. Hence, `BV-$\alpha$-positivity' is strictly stronger than `Ehrhart-positivity'.
\begin{lemma}\label{counter}
Suppose $P$ is a $n$-dimensional polytope that has the same normal fan as $P_n(a,b)$. Then $i(P,t)$ has positive coefficients.
\end{lemma}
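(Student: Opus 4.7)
The plan is to classify all polytopes with the prescribed normal fan up to lattice translation, write down their Ehrhart polynomial via the single-vertex chiseling inclusion-exclusion of Section~\ref{sec:chiseling}, and then verify coefficient positivity by a short estimate.

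The normal fan of $P_n(a,b)$ has rays $\pm\ee_1,\ldots,\pm\ee_n$ together with $-(\ee_1+\cdots+\ee_n)$. Hence, after a lattice translation (which preserves $i(P,t)$), any $n$-dimensional lattice polytope $P$ with this normal fan has the form
\[
\{x\in\RR^n : 0\le x_i\le a_i,\ x_1+\cdots+x_n\ge b'\}
\]
for positive integers $a_1,\ldots,a_n,b'$ with $b'<a_i$ for every $i$; this strict inequality is exactly the condition ensuring that the chisel facet meets each coordinate edge in its interior, which is what is needed for the combinatorial type to match. Such a $P$ is obtained from the lattice box $\prod_i[0,a_i]$ (with Ehrhart polynomial $\prod_i(a_it+1)$) by chiseling the origin at distance $b'$, so the single-vertex inclusion-exclusion derivation of Section~\ref{sec:chiseling} immediately gives
\[
i(P,t)\ =\ \prod_{i=1}^n (a_it+1)\ -\ \binom{b't+n-1}{n}.
\]

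For positivity I would expand $\binom{b't+n-1}{n}=\tfrac{1}{n!}\prod_{j=0}^{n-1}(b't+j)$ in powers of $t$; the coefficient of $t^k$ equals $c(n,k)(b')^k/n!$, where $c(n,k)$ denotes the unsigned Stirling number of the first kind. Hence the constant term of $i(P,t)$ is $1$, and for $1\le k\le n$ its $t^k$-coefficient is $e_k(a_1,\ldots,a_n)-c(n,k)(b')^k/n!$. Since $a_i\ge b'+1$, monotonicity of elementary symmetric polynomials gives $e_k(a_1,\ldots,a_n)\ge\binom{n}{k}(b'+1)^k$; and the classical identity $\sum_j c(n,j)=n!$ gives $c(n,k)\le n!$. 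Combined with $b'+1>b'\ge 1$, these yield
\[
\binom{n}{k}(b'+1)^k\ >\ \binom{n}{k}(b')^k\ \ge\ \frac{c(n,k)}{n!}(b')^k,
\]
establishing strict positivity of every coefficient.

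No substantial obstacle is anticipated; the entire argument amounts to a short computation after the translation step. The only mild subtlety is ensuring strictness of the final chain in the degenerate case $n=k=1$, where $\binom{1}{1}\cdot 1!=c(1,1)=1$; there strictness comes entirely from $b'+1>b'$ rather than from the Stirling bound.
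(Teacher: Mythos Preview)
Your argument is correct and follows essentially the same route as the paper: classify the polytopes with this normal fan as a box chiseled at one vertex, write $i(P,t)=\prod_i(a_it+1)-\binom{b't+n-1}{n}$, and then verify coefficientwise positivity by an elementary estimate. The only difference is in the final bound: the paper first replaces the box by $(at+1)^n$ with $a=\min_i a_i$ and then bounds the $t^k$-coefficient of $\binom{bt+n-1}{n}$ by $b^k\binom{n}{k}\frac{k^2}{n^2k!}$, whereas you keep $e_k(a_1,\ldots,a_n)$ and use the cruder (but entirely sufficient) bound $c(n,k)\le n!$; your version is arguably a bit cleaner.
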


\begin{proof} For any two polynomials $f$ and $g,$ we write $f \ge g$ if $[t^k] f \ge [t^k]g$ for each $k.$ So we need to show that $i(P,t) > 0.$
  One sees that $P$ is obtained from a rectangular box $[0,a_1]\times[0,a_2] \times \cdots \times [0,a_n]$ by chiseling off the origin at some distance $b,$ where $a_1,\dots, a_n, b$ are some positive integers satisfying $a_i > b$ for each $i.$ Therefore, letting $a = \min(a_i : 1 \le i \le n),$ we get
  \begin{align*}
    i(P,t)\ &=\ \prod_{i=1}^n (a_it+1) - \binom{bt+n}{n} + \binom{b t+n-1}{n-1}\\ &\ge\ (at+1)^n - \binom{bt+n-1}{n} \\
    &=\ \sum_{k=0}^n t^k \left( a^k \binom{n}{k} - b^k \cdot \frac{1}{n} \sum_{S: \text{$(k-1)$-subset of $[n-1]$}} \frac{1}{\prod_{s \in S} s}  \right)\\ &\ge\ \sum_{k=0}^n t^k \left( a^k \binom{n}{k} - b^k \cdot \frac{k^2}{n^2} \cdot \binom{n}{k} \cdot \frac{1}{k!} \right). 
  \end{align*}
  As $a > b$ and $\frac{k^2}{n^2 k!} \le 1$ for $0 \le k \le n$, the last expression is positive.
\end{proof}

\section*{Acknowledgements}
Part of this work was done when FC and BN met at the Fields Institute where BN took part in the Thematic Program Combinatorial Algebraic Geometry. BN would like to thank the Fields Institute for financial support and the organizers of the program for the invitation.


\begin{thebibliography}{10}

\bibitem{Polymake}
Benjamin Assarf, Ewgenij Gawrilow, Katrin Herr, Michael Joswig, Benjamin
  Lorenz, Andreas Paffenholz, and Thomas Rehn.
\newblock \texttt{polymake} in linear and integer programming, 2014.
\newblock \texttt{http://arxiv.org/abs/1408.4653}.

\bibitem{BRbook}
Matthias Beck and Sinai Robins.
\newblock {\em Computing the continuous discretely}.
\newblock Undergraduate Texts in Mathematics. Springer, New York, second
  edition, 2015.
\newblock Integer-point enumeration in polyhedra, With illustrations by David
  Austin.

\bibitem{localformula}
Nicole Berline and Mich{\`e}le {Vergne}.
\newblock Local {E}uler-{M}aclaurin formula for polytopes.
\newblock {\em Mosc. Math. J.}, 7(3):355--386, 573, 2007.

\bibitem{BK}
Ulrich Betke and Martin Kneser.
\newblock Zerlegungen und {B}ewertungen von {G}itterpolytopen.
\newblock {\em J. Reine Angew. Math.}, 358:202--208, 1985.

\bibitem{bruns}
Winfried Bruns.
\newblock The quest for counterexamples in toric geometry.
\newblock In {\em Commutative algebra and algebraic geometry ({CAAG}-2010)},
  volume~17 of {\em Ramanujan Math. Soc. Lect. Notes Ser.}, pages 45--61.
  Ramanujan Math. Soc., Mysore, 2013.

\bibitem{bgbook}
Winfried Bruns and Joseph Gubeladze. 
\newblock{\em Polytopes, rings, and {$K$}-theory}, 
{\em Springer Monographs in Mathematics}. 
\newblock Springer, Dordrecht, 2009.

\bibitem{Normaliz}
Winfried Bruns, Bogdan Ichim, Tim R\"omer, Richard Sieg, and Christof S\"oger.
\newblock {N}ormaliz. {A}lgorithms for rational cones and affine monoids.
\newblock Available at \url{http://normaliz.uos.de}.

\bibitem{BValpha}
Federico Castillo and Fu~Liu.
\newblock Berline-{Vergne} valuation and generalized permutohedra.
\newblock {\em arXiv:1509.07884}, 2015.

\bibitem{CLS}
David~A. Cox, John~B. Little, and Henry~K. Schenck.
\newblock {\em Toric varieties}, volume 124 of {\em Graduate Studies in
  Mathematics}.
\newblock American Mathematical Society, Providence, RI, 2011.

\bibitem{Ehrhart}
Eug{\`e}ne Ehrhart.
\newblock Sur les poly\`edres rationnels homoth\'etiques \`a {$n$}\ dimensions.
\newblock {\em C. R. Acad. Sci. Paris}, 254:616--618, 1962.

\bibitem{Hibinegative}
Takayuki Hibi, Akihiro Higashitani, Akiyoshi Tsuchiya, and Koutarou Yoshida.
\newblock Ehrhart polynomials with negative coefficients.
\newblock {\em arXiv:1506.00467v2}, 2016.

\bibitem{highintegral}
Fu~Liu.
\newblock Higher integrality conditions, volumes and {E}hrhart polynomials.
\newblock {\em Adv. Math.}, 226(4):3467--3494, 2011.

\bibitem{EhrPosSurvey}
Fu~Liu.
\newblock On positivity of {E}hrhart polynomials.
\newblock 2017, preprint available at https://arxiv.org/abs/1711.09962.

\bibitem{Database}
Benjamin Lorenz and Andreas Paffenholz.
\newblock Smooth reflexive polytopes up to dimension $9$.
\newblock \texttt{https://polymake.org/polytopes/paffenholz/www/fano.html},
  2016.

\bibitem{mcmullen}
Peter~McMullen.
\newblock Valuations and {E}uler-type relations on certain classes of convex
  polytopes.
\newblock {\em Proc. London Math. Soc. (3)}, 35(1):113--135, 1977.

\bibitem{Oebro}
Mikkel {\O}bro.
\newblock An algorithm for the classification of smooth {Fano} polytopes.
\newblock {\em arXiv:0704.0049}, 2007.

\bibitem{postnikov}
Alexander Postnikov.
\newblock Permutohedra, associahedra, and beyond.
\newblock {\em Int. Math. Res. Not. IMRN}, (6):1026--1106, 2009.

\bibitem{zonotopes}
Richard~P. Stanley.
\newblock A zonotope associated with graphical degree sequences.
\newblock In {\em Applied geometry and discrete mathematics}, volume~4 of {\em
  DIMACS Ser. Discrete Math. Theoret. Comput. Sci.}, pages 555--570. Amer.
  Math. Soc., Providence, RI, 1991.

\bibitem{enum}
Richard~P. Stanley.
\newblock {\em Enumerative combinatorics. {V}olume 1}, volume~49 of {\em
  Cambridge Studies in Advanced Mathematics}.
\newblock Cambridge University Press, Cambridge, second edition, 2012.

\bibitem{stanley-pitman}
Richard~P. Stanley and Jim Pitman.
\newblock A polytope related to empirical distributions, plane trees, parking
  functions, and the associahedron.
\newblock {\em Discrete Comput. Geom.}, 27(4):603--634, 2002.
\end{thebibliography}
\end{document}